\theoremstyle{plain}
\newtheorem{thm}{Theorem}[section]
\newtheorem{prop}[thm]{Proposition}
\newtheorem{lem}[thm]{Lemma}
\theoremstyle{definition}
\newtheorem{rmk}[thm]{Remark}
\numberwithin{equation}{section}
\begin{document}
\title{Prevalent Behavior of Smooth Strongly Monotone Discrete-Time Dynamical Systems\thanks{Supported by NSF of China No.11825106, 11771414 and 11971232, Wu Wen-Tsun Key Laboratory of Mathematics, University of Science and Technology of China.}}
\author{Yi Wang}
\author{Jinxiang Yao}
\author{Yufeng Zhang\thanks{Corresponding author: zyfp@mail.ustc.edu.cn (Y. Zhang).}}

\affil{School of Mathematical Sciences\\ University of Science and Technology of China\\ Hefei, Anhui, 230026, P. R. China}
\renewcommand\Authands{ and }
\date{}

  \maketitle
\begin{abstract}
For $C^{1}$-smooth strongly monotone discrete-time dynamical systems, it is shown that ``convergence to linearly stable cycles'' is a prevalent asymptotic behavior in the measure-theoretic sense. The results are then applied to classes of time-periodic parabolic equations and give new results on prevalence of convergence to periodic solutions. In particular, for equations with Neumann boundary conditions on convex domains, we show the prevalence of the set of initial conditions corresponding to the solutions that converge to spatially-homogeneous periodic solutions. While, for equations on radially symmetric domains, we obtain the prevalence of the set of initial values corresponding to solutions that are asymptotic to radially symmetric periodic solutions.\par
\end{abstract}
\noindent\section{Introduction}
Prevalence is a frequently used notion, first introduced by Christensen \cite{C72} and Hunt, Sauer and Yorke \cite{HSY93}, that describes properties of interest occurs for ``almost surely" in an infinite-dimensional space from a probabilistic or measure-theoretic perspective. It is a natural generalization to separable
Banach spaces of the notion of Lebesgue measure zero for Euclidean spaces (see definition in Section \ref{sec2}). In particular, on $\mathbb{R}^n$, it is equivalent to the notion of ``Lebesgue almost everywhere". Over decades since its development, prevalence have undergone extensive investigations. We refer to \cite{HSY93,K97,BV10,EN20} (and references therein) for more details.

In dynamical systems, prevalence is parallel to another more classical notion called genericity, which formulates in the topological sense the properties of interest occurs residually (i.e., on a countable intersection of open dense subsets in a Baire space). Many examples in dynamical systems are known to be both generic in the topological sense and prevalent in measure-theoretic sense; that is, these two notions of typicality could coincide with each other in many circumstances (see, e.g. \cite{BV10,K97}). However, on the other hand, there are also many cases for which prevalence differs from topological genericity. As a matter of fact, in many cases properties that are generic in the topological sense are not prevalent in the measure-theoretic sense, and vise-versa (see, e.g. \cite{BV10,K97}).

Monotone dynamical systems are abundant and important sources of topological genericity and prevalence. The theory of monotone dynamical systems grew out of the series of ground-breaking work of M. W. Hirsch \cite{H84,H88,H82,H85} and H. Matano \cite{M79,M86}; largely focusing on ordinary differential equations and parabolic partial differential equations. Since then, the theory and applications have been extended to discrete-time dynamical systems, non-autonomous systems, as well as random/stochastic systems. One may refer to \cite{I02,FWW17,MH05,JX05,P,WY98,H,H17} for the earlier and recent developments.

The core to the huge success of developing the theory and applications of a strongly  monotone semiflow is the so called {\it Hirsch's generic convergence theorem} \cite{H85}, concluding that the generic precompact orbit approaches the set of equilibria (also referred as generic quasi-convergence).
Later on, for $C^1$-smooth strongly monotone semiflows, the improved {\it generic convergence} was obtained by Pol\'{a}\v{c}ik \cite{P89,P90} and Smith and Thieme \cite{ST91}.  Very recently, generic Poincar\'{e}-Bendixson Theorem was established by Feng, Wang and Wu \cite{FWW19,FWW17} for smooth flows strongly monotone with respect to cone of rank-$2$.

For strongly monotone discrete-time systems (mappings), however, there is no result analogous
to Hirsch's generic convergence theorem is available if they are merely continuous (or even Lipschitz).
Whether any characterization of typical dynamics is possible in this case or not is
still an open problem, unless certain smoothness assumption is imposed. Pol\'{a}\v{c}ik and Tere\v{s}\v{c}\'{a}k \cite{PI91} first proved that the
{\it generic convergence to cycles} occurs provided that the mapping $F$ is of class $C^{1,\alpha}$ (i.e., $F$ is a $C^1$-map with a locally $\alpha$-H\"{o}lder derivative $DF$, $\alpha\in (0,1]$). Here, a cycle means a periodic orbit of $F$. For the lower regularity of $F$, Tere\v{s}\v{c}\'{a}k \cite{T94} and Wang and Yao \cite{WY20} utilized different approaches to obtain the generic convergence to cycles for $C^1$-smooth strongly monotone discrete-time systems.

A drawback of topological genericity is that open dense subsets can be arbitrarily small in terms of measure. Hirsch \cite{H88} further showed that his generic convergence theorem also holds in a measure-theoretical sense of Gaussian measure. However, since it is based on the normal
distributions in statistics, Gaussian measure is not translation-invariant.

Hence, a natural interesting question arises whether the prevalent dynamics analogue hold for strongly monotone systems. To the best of our knowledge,
Enciso, Hirsch and Smith \cite{GMH08} first tackled the problem and investigated the prevalent behavior of a strongly monotone semiflow $\Phi_t$. Among others, they \cite[Theorem 1]{GMH08} proved that the set of points that converge to a linearly stable equilibrium is prevalent. Here, an equilibrium is linearly stable if the spectral radius of the Frechet derivative $D\Phi_t$ at the equilibrium is no more than $1$ for all $t>0$.

In the present paper, we shall focus on prevalent behavior of the strongly monotone discrete-time dynamical systems. To be more specific, we will first show that, for $C^1$-smooth strongly monotone discrete-time system, the set of points converge to a linearly stable cycle is prevalent (see Theorem A). Here, a cycle is linearly stable if the spectral radius of the derivative $DF^p$ along the cycle (of period $p$) is no more than 1 (see Section \ref{sec2}).

It deserves to point out that there are a few major differences
of research approaches between discrete and continuous time strongly monotone systems. For instance, Limit Set Dichotomy (see, e.g. \cite[Theorem 4, p.121]{GMH08} or \cite{MH05,H}), which is one of the fundamental building blocks for continuous-time systems, are no longer valid for discrete-time systems (see, e.g. \cite{P,DP91,P92}). Thus, the situation is quite different with discrete-time systems as one has no a priori information on the structure of limit sets of typical trajectories.
Consequently, novel techniques are needed to understand the prevalent dynamics. Our approach is based on a critical insight for the inherent structure of discrete-time strongly monotone systems, called \emph{Dynamics Alternatives} (see Lemma \ref{dic1}),  which was first discovered by Pol\'{a}\v{c}ik and Tere\v{s}\v{c}\'{a}k \cite{PI91} for $C^{1,\alpha}$-mappings and was recently improved by the two of present authors \cite{WY20} for $C^1$-mappings.
Together with a useful tool of upper (respectively, lower) $\omega$-unstable sets introduced by Tak\'{a}\v{c} \cite{P92}, we utilize the $C^1$-Dynamics Alternatives to accomplish our approach.

Moreover, motivated by our approach, we show the prevalence of asymptotic symmetry in
discrete-time strongly monotone systems on which a compact connected
group $G$ acts. This gives the prevalent analogues to the works by Mierczy\'{n}ski and Pol\'{a}\v{c}ik \cite{M89} and Tak\'{a}\v{c} \cite{P920} on the topological genericity of asymptotic symmetry in
continuous and discrete strongly monotone dynamical systems admitting group actions (see also \cite{W09}).
For these results, an interesting feature of is that no
smoothness assumption was imposed on the systems. Combining with Theorem A, we further obtain that, if in addition the discrete-time system is $C^1$-smooth, then the set of points that are asymptotic to $G$-symmetric cycles is prevalent (see Theorem B).

Our main results will be applied to obtain the dynamics of prevalent initial values for nonlinear time $\tau$-periodic parabolic equations with proper boundary value problems. By applying Theorem A, we first obtain the prevalence of the set of initial points corresponding to the solutions that are asymptotic to a linearly stable subharmonic periodic solution (i.e., solution whose minimal period is $k\tau$ ($k\in \mathbb{N}$), a nontrivial multiple of the period of the equations). In particular, for the periodic parabolic equations with Neumann boundary conditions on convex domains, we proved the prevalence of the set of initial conditions corresponding to the solutions that converge to spatially homogeneous $\tau$-periodic solutions. We further apply Theorem B to obtain asymptotic symmetry of prevalent solutions for time-period parabolic equations on a radially symmetric domain. In particular, due to the close interplay between spatial geometry and the temporal evolution of solutions, we obtained the prevalence of the set of initial values corresponding to solutions that are asymptotic to radially symmetric $\tau$-periodic solutions.\par

This paper is organized as follow. In Section \ref{sec2}, we agree on some notations, give relevant definitions and state our main results. We give the proofs of our main theorems in Section \ref{sec3}. Section \ref{sec5} is devoted to the study the prevalent dynamical behavior of the solutions for the nonlinear time-periodic parabolic equations.\par


\noindent\section{Notations and Main Theorems}\label{sec2}
In this section, we will fix some notations, make some preliminaries and formulate our main theorems at the end of this section. \par
Let $(\mathbb{B},\|\cdot\|)$ be a Banach space with norm $\|\cdot\|$. A closed convex subset $K\subset\mathbb{B}$ is called a \emph{closed convex cone} if
(i) if $x\in K$ and $\alpha>0$, then $\alpha x\in K$;
(ii) if $x,y\in K$, then $x+y\in K$;
(iii) $K\cap (-K)=\{0\}$.
For $x,y\in \mathbb{B}$, we write $x\leq y$ iff $y-x\in K$; $x<y$ iff $x\leq y$ and $x\neq y$; $x\ll y$ iff $y-x\in$ $\rm{Int}$$K$.
$\mathbb{B}$ is called an \emph{ordered Banach space} if it is endowed with an order relation, induced by a cone $K$. In particular, $\mathbb{B}$ is called a \emph{strongly ordered Banach space} if the interior $\rm{Int}$$K$ of $K$ is nonempty. For $a,b\in \mathbb{B}$ with $a\leq b$, we denote the \emph{closed order interval} by $[a,b]=\{x\in \mathbb{B}:a\leq x\leq b\}$ and the \emph{open order interval} by $[[a,b]]=\{x\in \mathbb{B}:a\ll x\ll b\}$. In particular, we write $[a,\infty]]=\{x\in \mathbb{B}:x\geq a\}$ and $[[\infty,b]=\{x\in \mathbb{B}:x\leq b\}$. Let $X\subset\mathbb{B}$, we denote by $\widehat{X}$ the set $X$ with the order-topology generated from open order interval $[[a,b]]$. A subset $X\subset\mathbb{B}$ is called \emph{order-convex} in $\mathbb{B}$ if $[a,b]\subset X$ whenever $a,b\in X$ and $a<b$. 

Let $X\subset\mathbb{B}$ and $F:X\rightarrow X$ is a continuous map. The map $F$ is called \emph{monotone} if $x\leq y$ implies $Fx\leq Fy$, and is called \emph{strongly monotone} if $Fx\ll Fy$ whenever $x<y$.\par

A \emph{semi-orbit} of $x\in X$ is $\mathcal{O}^{+}(x):=\{F^{n}x\}_{n\geq 0}$, and the \emph{$\omega$-limit set} of $x$ is $\omega(x):=\bigcap_{k\geq0}\overline{\mathcal{O}^{+}(F^{k}x)}$. We say that the map $F$ is \emph{$\omega$-compact} in a subset $B\subset X$ if $\mathcal{O}^{+}(x)$ has compact closure in $X$ for each $x\in B$ and $\cup_{x\in B}\omega(x)$ has compact closure in $X$. A point $x\in X$ is called \emph{$p$-periodic} for some integer $p\geq 1$, if $F^{p}(x)=x$ and $F^{l}x\neq x$ for $l=1$, $2$, $\cdots$, $p-1$. A set $B$ is called a \emph{cycle} if $B=\mathcal{O}^{+}(x)$ for some periodic points $x$. A cycle is \emph{linearly stable} if the spectral radius of the derivative $DF^{p}$ along the cycle (of period $p$) is no more than $1$. A set $B\subset X$ is \emph{unordered} if it does not contain points $x,y$ such that $x<y$.  Given any $x\in X$, we define the \emph{upper} and  \emph{lower $\omega$-limit sets} of $x$ in $X$ by
$$\omega_{+}(x):=\bigcap_{\substack{u\in X\\u\gg x}}\overline{(\bigcup_{\substack{y\in X\\ u\geq y>x}}\omega(y))} \quad and\quad
\omega_{-}(x):=\bigcap_{\substack{u\in X\\u\ll x}}\overline{(\bigcup_{\substack{y\in X\\ u\leq y<x}}\omega(y))},$$
respectively. Clearly, $\omega_{+}(x)$ (resp. $\omega_{-}$) is non-empty if $F$ is $\omega$-compact in every closed order interval $[a,b]$ in $X$. Hereafter, we write 
$$\mathcal{U}_{+}=\{x\in X:\omega_{+}(x)\neq \omega(x)\} \quad and\quad \mathcal{U}_{-}=\{x\in X:\omega_{-}(x)\neq \omega(x)\}$$
as the \emph{upper $\omega$-unstable set} and \emph{lower $\omega$-unstable set} in $X$, respectively.\\\par

Throughout the paper, we will assume the following standing assumptions:
\begin{description}
  \item[(H1)] $\mathbb{B}$ is a separable strongly ordered Banach space with a closed-convex cone $K$; and $F:\mathbb{B}\rightarrow \mathbb{B}$ is a strongly monotone map.\par
  \item[(H2)] Let $X\subset\mathbb{B}$ be an $F$-invariant order-convex open subset; and $F$ is $\omega$-compact in every closed order interval $[a,b]$ in $X$.\par
  \item[(H3)] $F:\mathbb{B}\rightarrow \mathbb{B}$ is a $C^{1}$-map, such that for any $x\in \mathbb{B}$ the derivative $DF(x)$ is a compact strongly positive operator, i.e., $DF(x)v\gg 0$ whenever $v>0$.
\end{description}\par

In the following, we give the definition of \emph{prevalence} (see, e.g. \cite{BV10,HSY93,GMH08}). A Borel set $W\subset \mathbb{B}$ is \emph{shy} if there exists a nonzero compactly supported Borel measure $\mu$ on $\mathbb{B}$ such that $\mu(x+W)=0$ for every $x\in\mathbb{B}$. More generally, a set is called \emph{shy} if it is contained in a shy Borel set. A set is \emph{prevalent} if its complement is shy. Given $A\subset \mathbb{B}$, we say that a set $W$ is \emph{prevalent in} $A$ if $A\setminus W$ is shy.\par
In general situation, shy sets have the following properties: (i) Every subset of a shy set is shy; (ii) Every translation of a shy set is shy; (iii) No nonempty open set is shy. (iv) Every countable union of shy sets is shy (\cite{HSY93,BV10}). In finite-dimensional spaces, a set $M$ is shy if and only if it has Lebesgue measure zero (see, e.g. \cite[Proposition 2.5]{BV10} or \cite[Fact 6]{HSY93}).\par


Denote by $\mathfrak{C}_{P}$ the set of states $x\in X$ such that $\omega(x)$ is a linearly stable cycle, i.e.,
$$\mathfrak{C}_{P}:=\{x\in X:\omega(x)\ is\ a\ linearly\ stable\ cycle\}.$$

\vspace{2 ex}

Our first main theorem is as follows:\par

\vspace{2 ex}

\noindent$\textbf{Theorem A}.$
\emph{Assume that \emph{(H1)}, \emph{(H2)} and \emph{(H3)} hold. Then $\mathfrak{C}_{P}$ is prevalent in $X$.}\par

\vspace{2 ex}

In order to state our next main theorem, we need to introduce an additional assumption. Let $G$ be a compact metrizable topological group. A mapping $\Gamma:G\times X\rightarrow X$ is called a \emph{group action} of $G$ on $X$ if it is jointly continuous and $g\mapsto\Gamma(g)\equiv\gamma(g,\cdot)$ is a group homeomorphism of $G$ into Hom($X$), the group of all homeomorphisms of $X$ onto itself. We say that $\gamma$ is \emph{increasing} if, for each $g\in G$, the mapping $\Gamma(g):X\rightarrow X$ is increasing, i.e., $x\leq y$ in $X$ implies $\Gamma(g)x\leq\Gamma(g)y$. We say that $\gamma$ \emph{commutes} with a map $F$ if, for each $g\in G$, the mapping $\Gamma(g)$ commutes with $F$. For briefly, We write $\gamma(g,x)\equiv g\cdot x$. A point $x\in X$ is called \emph{G-symmetric} if $g\cdot x=x$ for all $g\in G$. A subset $S\subset X$ is called \emph{G-symmetric} if all points in $S$ are $G$-symmetric.\par

\begin{description}
  \item[(H4)] $G$ is a compact connected metrizable topological group acting on $X$ in such a way that its action is increasing and commutes with $F$.
\end{description}

\noindent$\textbf{Theorem B}.$
\emph{Assume that \emph{(H1)}, \emph{(H2)} and \emph{(H4)} hold. Then the subset $$S=\{x\in X:\omega(x)\ is\ G\text{-}symmetric\}$$ is prevalent in $X$. Moreover, if in addition \emph{(H3)} holds, then the set of initial points corresponding to semiorbits that are asymptotic to $G$-symmetric linearly stable cycles, is prevalent in $X$.}

\vspace{2 ex}

\begin{rmk}
Under assumptions (H1)-(H3), it has been proved in Tere\v{s}\v{c}\'{a}k \cite{T94} and Wang and Yao \cite{WY20} that $\mathfrak{C}_{P}$ contains an open and dense subset of $X$ (hence, $\mathfrak{C}_{P}$ is generic in $X$). Our Theorem A shows that $\mathfrak{C}_{P}$ is also prevalent in $X$. This entails the fact that \emph{``convergence to linearly stable cycles''} in $C^{1}$-smooth discrete-time strongly monotone systems are both prevalent in measure-theoretic sense and  generic in the topological sense.
\end{rmk}

\begin{rmk}
In \cite{M94}, Mierczy\'nski smartly constructed an example of a strongly monotone discrete-time system, in which the subset $\mathcal{M}\cup \mathcal{N}$ is open and dense (hence, generic), but not prevalent in $X$ (see \cite[p.1492]{M94}). Here $\mathcal{M}$ is the set of points whose orbits are eventually monotone, and $\mathcal{N}$ is the set of those convergent $\xi$ for which there is a simply ordered arc $L\ni\xi$, $L\subset X\backslash \mathcal{M}$ (see more detail of $\mathcal{M}$ and $\mathcal{N}$ in \cite[p.1477]{M94}). We here point out that $\mathcal{M}\cup \mathcal{N}$ is actually contained in $\mathfrak{C}_{P}$, which means that $\mathcal{M}\cup \mathcal{N}$ is not large enough to guarantee its prevalence in $X$.
\end{rmk}

\begin{rmk}
Theorem B provides the prevalent analogues to the works by Mierczy\'nski and Pol\'a\v{c}ik \cite{M89} and Tak\'a\v{c} \cite{P920} on the topological genericity of symmetry in continuous and discrete strongly monotone dynamical systems admitting group actions.
\end{rmk}


\noindent\section{Proof of the main Theorems}\label{sec3}
In this section, we will prove our main Theorems. 
For this purpose, we need several useful propositions, which describe the structures of the upper (resp. low) $\omega$-unstable sets $\mathcal{U}_{+}$ (resp. $\mathcal{U}_{-}$), as well as a useful sufficient condition to guarantee a set to be shy in $X$.\par



First, we focus on the structure of $\mathcal{U}_{+}$ and $\mathcal{U}_{-}$. Some additional notations are introduced. A pair $(A,B)$ of subsets $A,B$ of $X$ is called an \emph{order decomposition} of $X$ if it satisfies: (i) $A\neq\emptyset$ and $B\neq\emptyset$; (ii) $A$ and $B$ are closed; (iii) $A$ is lower closed (i.e., $[[-\infty,a]\subset A$ whenever $a\in A$) and $B$ is upper closed (i.e., $[b,\infty]]\subset B$ whenever $b\in B$); (iv) $A\cup B=X$; (v) Int$(A\cap B)=\emptyset $.\par

An order decomposition $(A,B)$ of $X$ is called \emph{invariant} if $F(A)\subset A$ and $F(B)\subset B$. The set $H=A\cap B$ (possibly empty) is called the \emph{boundary} of the order decomposition $(A,B)$ of $X$. A \emph{$d$-hypersurface} is a nonempty subset $H$ of $X$ such that $H=A\cap B$ for some order decomposition $(A,B)$ of $X$. The boundary $H$ of the order decomposition $(A,B)$ of $X$ satisfies $H=\partial A=\partial B$, where ``$\partial$" is the boundary symbol in $X$, and $H$ is invariant whenever $(A,B)$ is invariant. Clearly, a $d$-hypersurface $H$ never contains two strongly ordered points ($x\ll y$). In particular, every nonempty, unordered, invariant set $G\subset X$ is contained in some invariant $d$-hypersurface $H\subset X$ (see, e.g. \cite[Proposition 1.2]{P92}).\par
A system $\Gamma$ of invariant order decompositions of $X$ is called an \emph{invariant order resolution} of $X$ if $\Gamma$ satisfies (i) If $(A_{1}B_{1})$ and $(A_{2},B_{2})\in \Gamma$, then either $A_{1}\subset A_{2}$ or $A_{2}\subset A_{1}$; (ii) If $\mathcal{O}^{+}(x)$ is unordered, then $O^{+}(x)\subset H=A\cap B$ for some $(A,B)\in \Gamma$. The existence of the invariant order resolution can be guaranteed by \cite[Theorem 4.2]{P92}.\par
Now we give the following structures of the $\mathcal{U}_{+}$ and $\mathcal{U}_{-}$, which are critical for the proof of the prevalence. \par
\begin{lem}\label{borel}
Assume that \emph{(H1)}, \emph{(H2)} hold. Let $\Gamma$ be the invariant order resolution of $X$. Then $\mathcal{U}_{+}$ is the union of at most countably many sets $M_{i}$; each $M_{i}$ are relatively open in $H_{i}$, where $H_{i}=A\cap B$ for some $(A,B)\in \Gamma$. A corresponding result holds for $x\in\mathcal{U}_{-}$.
\end{lem}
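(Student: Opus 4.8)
The plan is to fix the invariant order resolution $\Gamma$ of $X$ (whose existence is guaranteed by \cite[Theorem 4.2]{P92}) and to show that $\mathcal{U}_+$ decomposes along the $d$-hypersurfaces $H = A\cap B$, $(A,B)\in\Gamma$, into countably many pieces, each relatively open in the corresponding $H$. First I would recall the key feature of $\mathcal{U}_+$: if $x\in\mathcal{U}_+$, i.e. $\omega_+(x)\neq\omega(x)$, then by strong monotonicity the orbit $\mathcal{O}^+(x)$ cannot be compared with orbits of points slightly above it in the order; more precisely, the defect between $\omega_+(x)$ and $\omega(x)$ forces $\mathcal{O}^+(x)$ (or its $\omega$-limit set) to be unordered. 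The standard argument here is: if some $y>x$ with $y$ close to $x$ had $\omega(y)=\omega(x)$ for all such $y$, then $\omega_+(x)=\omega(x)$; hence $x\in\mathcal{U}_+$ yields that $\mathcal{O}^+(x)$ lies on a $d$-hypersurface — by the quoted \cite[Proposition 1.2]{P92} applied to the unordered invariant set $\overline{\mathcal{O}^+(x)}$ (or by property (ii) of the order resolution directly), $\mathcal{O}^+(x)\subset H=A\cap B$ for some $(A,B)\in\Gamma$.

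Next I would stratify $\Gamma$ itself. Since $\Gamma$ is totally ordered by inclusion of the lower members $A$, and $\mathbb{B}$ is separable, the family of distinct hypersurfaces $\{H_{(A,B)}\}$ that actually meet $\mathcal{U}_+$ can be enumerated: using separability of $\mathbb{B}$ one picks a countable dense set and shows that only countably many of the $H$'s can contain orbits, because between two nested order decompositions $(A_1,B_1)\subsetneq(A_2,B_2)$ the ``gap'' $A_2\setminus A_1$ has nonempty interior (by minimality/order-convexity considerations and property (v) Int$(A\cap B)=\emptyset$), and disjoint open sets in a separable space form a countable collection. Label the resulting hypersurfaces $H_1,H_2,\dots$ and set $M_i := \mathcal{U}_+\cap H_i$; then $\mathcal{U}_+=\bigcup_i M_i$.

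It remains to show each $M_i$ is relatively open in $H_i$. For this I would argue by contradiction: suppose $x\in M_i$ but there is a sequence $x_k\to x$ with $x_k\in H_i\setminus\mathcal{U}_+$, i.e. $\omega_+(x_k)=\omega(x_k)$. Using $\omega$-compactness on order intervals (H2) together with continuity of $F$ and the monotonicity of the maps $u\mapsto\omega(u)$ on the relevant comparable families, one shows $\omega(x_k)\to\omega(x)$ and likewise $\omega_+(x_k)\to\omega_+(x)$ in an appropriate (Hausdorff) sense; combined with the semicontinuity properties of $\omega_+$ and $\omega$, and the fact that $x_k\in H_i$ forces the comparison sets used to define $\omega_+(x_k)$ to ``converge'' to those defining $\omega_+(x)$, this would yield $\omega_+(x)=\omega(x)$, contradicting $x\in\mathcal{U}_+$. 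The argument for $\mathcal{U}_-$ is symmetric, replacing $\gg$ by $\ll$ and $\omega_+$ by $\omega_-$.

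The main obstacle I expect is the openness claim — specifically, controlling the behavior of $\omega_+$ under limits of points constrained to lie on the hypersurface $H_i$. One must carefully exploit that $H_i$ is invariant and that the defining intersection for $\omega_+(x)$ ranges over $u\gg x$; the delicate point is that this family of $u$'s varies with the base point, so one needs a uniform comparison (e.g. choosing $u$ independent of $k$, valid since $x_k\to x$ and Int$K\neq\emptyset$) to pass to the limit. This is exactly where the strongly ordered structure (nonempty interior of $K$) and the $\omega$-compactness hypothesis (H2) are used in tandem, and getting the semicontinuity estimates to close the contradiction is the technical heart of the proof.
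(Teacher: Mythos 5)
The paper does not actually prove this lemma: it is quoted verbatim from Tak\'a\v{c} \cite[Corollary 5.6]{P92}, whose proof is a substantial piece of that long paper, so your sketch has to stand on its own. Your skeleton is right (points of $\mathcal{U}_+$ have unordered orbits, hence their orbits lie on boundaries $H=A\cap B$ of members of $\Gamma$, and one takes $M_i=\mathcal{U}_+\cap H_i$), but the two load-bearing steps have genuine gaps. The countability step fails as stated: your argument uses only the order structure of the chain $\Gamma$ (nonempty-interior ``gaps'' between nested decompositions plus separability) and no property of $\mathcal{U}_+$ whatsoever. The resolution $\Gamma$ is in general an uncountable, order-dense chain, so there are no ``consecutive'' pairs, and the difference sets $A_2\setminus A_1$ for different pairs are nested rather than pairwise disjoint, so ``disjoint open sets in a separable space are countable'' does not apply. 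If your argument were sound it would show that \emph{any} subset of $X$ is covered by countably many boundaries of the resolution, which is false; the countability must be extracted from the structure of $\mathcal{U}_+$ itself (in Tak\'a\v{c}'s development it comes essentially from the pairwise disjoint open intervals $[[x,a_x]]$ attached to unstable points, i.e.\ the content of Proposition \ref{exp}, combined with separability).

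The second gap is the relative openness of $M_i$ in $H_i$, which is the heart of the lemma and which you do not establish: the proposed passage to the limit of $\omega(x_k)$ and $\omega_+(x_k)$ in a Hausdorff sense is exactly what is unavailable, since $x\mapsto\omega(x)$ is not continuous for monotone maps (one has at best one-sided semicontinuity statements), and you acknowledge this obstacle without resolving it; the workable route again goes through the $[[x,a_x]]$-type information ($\omega(y)=\omega_+(x)$ for all $y\in[[x,a_x]]$) rather than through limits of $\omega$-limit sets. Finally, your first step -- that $x\in\mathcal{U}_+$ forces $\mathcal{O}^+(x)$ to be unordered -- is true but your justification is circular: you only restate the definition of $\omega_+$. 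The actual argument is: if two iterates are related, say $F^mx<F^nx$, then strong monotonicity gives $F^{m+1}x\ll F^{n+1}x$, the subsequence $F^{kp}(F^{m+1}x)$ increases to a cycle by $\omega$-compactness on order intervals, and a squeezing argument (using closedness of the cone and compactness, since the cone need not be normal) shows that every $y$ with $x<y\le u$, for a suitable $u\gg x$, has the same $\omega$-limit set, whence $\omega_+(x)=\omega(x)$. In short, the plan identifies the right objects, but neither the countability nor the openness claim is proved, so the proposal does not yet amount to a proof of the lemma that the paper simply defers to \cite{P92}.
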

\begin{proof}
See Tak\'a\v{c} \cite[Corollary 5.6]{P92}.
\end{proof}
\begin{rmk}
If, in addition, $X$ is order-open ($X$ is open in the order-topology of $\widehat{\mathbb{B}}$), then each $M_{i}$ is a Lipschitz manifold of codimension one in $\widehat{\mathbb{B}}$ (see the detail in Tak\'a\v{c} \cite[Corollary 5.6]{P92}).
\end{rmk}


\begin{prop}\label{exp}
Assume that \emph{(H1)}, \emph{(H2)} hold. Then\par
\emph{(i).} If $x\in \mathcal{U}_{+}$, then there exists some $a_{x}\gg x$ such that $\omega(y)=\omega_{+}(x)$ for any $y\in [[x,a_{x}]]$; and hence, $[[x,a_{x}]]\cap\mathcal{U}_{+}=\emptyset$.\par
\emph{(ii).} Let $x,y\in \mathcal{U}_{+}$ and $x\ll y$. Then the two corresponding open sets obtained in \emph{(i)} are mutually disjoint, i.e., $[[x,a_{x}]]\cap[[y,a_{y}]]=\emptyset$.\par
A corresponding result holds for $x,y\in\mathcal{U}_{-}$.


\end{prop}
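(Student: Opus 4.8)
The plan is to establish (i) in full and then read off (ii) and the $\mathcal{U}_{-}$-statement from it. For (ii): suppose $x\ll y$ both lie in $\mathcal{U}_{+}$ and some $z\in[[x,a_{x}]]\cap[[y,a_{y}]]$; from $z\in[[y,a_{y}]]$ and $z\in[[x,a_{x}]]$ we get $y\ll z\ll a_{x}$, hence $y\ll a_{x}$, so together with $x\ll y$ we have $y\in[[x,a_{x}]]$, and then part (i) gives $\omega(y)=\omega_{+}(x)$; but $z\in[[x,a_{x}]]$ forces $\omega(z)=\omega_{+}(x)$ while $z\in[[y,a_{y}]]$ forces $\omega(z)=\omega_{+}(y)$, so $\omega_{+}(y)=\omega_{+}(x)=\omega(y)$, contradicting $y\in\mathcal{U}_{+}$. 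The statements for $\mathcal{U}_{-}$ then follow by applying everything to $F$ with the reversed cone $-K$.

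For (i), fix $x\in\mathcal{U}_{+}$ and set, for $u\gg x$,
\[
\Omega(x,u):=\overline{\bigcup\nolimits_{x<y\le u}\omega(y)}.
\]
First I collect the soft facts. Choosing $e\gg 0$ and $u_{n}:=x+e/n$ (which lies in $X$ for $n$ large and satisfies $u_{n+1}\le u_{n}\gg x$), assumption (H2) makes $\Omega(x,u_{1})$ compact, makes $\{\Omega(x,u_{n})\}_{n}$ a decreasing sequence of nonempty compacta, and gives $\bigcap_{n}\Omega(x,u_{n})=\bigcap_{u\gg x}\Omega(x,u)=\omega_{+}(x)$, so $\omega_{+}(x)$ is nonempty and compact. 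Next, strong monotonicity supplies the order comparisons I need: $\omega(y)\subseteq\Omega(x,u)$ whenever $x<y\le u$ is immediate from the definition; and from $F^{m}x\le F^{m}y$ for all $m$ (valid when $x\le y$) one gets, by passing to common convergent subsequences, $\omega(x)\le_{H}\omega(y)$ for $x<y$ and — applying this to the $u_{n}<y$ and passing the witnessing points to the limit inside $\bigcap_{m}\Omega(x,u_{m})$ — $\omega_{+}(x)\le_{H}\omega(y)$ for every $y\gg x$, where $\le_{H}$ is the Hausdorff order on compacta. Finally I record that $\omega_{+}(x)$ is unordered: it is forward-invariant (being an intersection of closures of forward-invariant unions), so a relation $p<q$ with $p,q\in\omega_{+}(x)$ would, under strong monotonicity, give $F^{n}p\ll F^{n}q$ inside $\omega_{+}(x)$ for all $n\ge1$, incompatible with $\omega_{+}(x)$ being a compact invariant limit set of a strongly monotone map (this is part of the structure theory behind the $d$-hypersurface machinery, cf.\ \cite{P92}); hence no two points of $\omega_{+}(x)$ are comparable.

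The heart of the matter is to produce a single $a_{x}\gg x$ with $\Omega(x,a_{x})=\omega_{+}(x)$, i.e.\ to see that the decreasing family already stabilizes at a positive level. I would argue by contradiction: were $\Omega(x,u)\supsetneq\omega_{+}(x)$ for every $u\gg x$, we could choose $u_{1}\gg u_{2}\gg\cdots\downarrow x$ with $\Omega(x,u_{m+1})\subsetneq\Omega(x,u_{m})$, pick $p_{m}\in\Omega(x,u_{m})\setminus\Omega(x,u_{m+1})$, and extract a limit point $p$ of $\{p_{m}\}$ in the compactum $\Omega(x,u_{1})$; then $p\in\bigcap_{m}\Omega(x,u_{m})=\omega_{+}(x)$, yet, each $\Omega(x,u_{k})$ being closed and containing $p_{m}$ only for $m<k$, the point $p$ lies outside some tail $\Omega(x,u_{k})$ — contradicting $\omega_{+}(x)\subseteq\Omega(x,u_{k})$. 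With $a_{x}$ in hand, take any $y\in[[x,a_{x}]]$: on one hand $\omega(y)\subseteq\Omega(x,a_{x})=\omega_{+}(x)$ directly, since $x<y\le a_{x}$; on the other hand $\omega_{+}(x)\le_{H}\omega(y)$, so each $p\in\omega_{+}(x)$ satisfies $p\le q$ for some $q\in\omega(y)\subseteq\omega_{+}(x)$, whence unorderedness of $\omega_{+}(x)$ forces $p=q\in\omega(y)$ and $\omega_{+}(x)\subseteq\omega(y)$. Thus $\omega(y)=\omega_{+}(x)$ for every $y\in[[x,a_{x}]]$. Applying this to a sub-interval $[[y,b]]\subset[[x,a_{x}]]$ (any $y\ll b\ll a_{x}$) shows $\omega_{+}(y)=\omega_{+}(x)=\omega(y)$, so $y\notin\mathcal{U}_{+}$; therefore $[[x,a_{x}]]\cap\mathcal{U}_{+}=\emptyset$, which completes (i).

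I expect the main obstacle to be exactly the stabilization step producing $a_{x}$, together with the upgrade from the Hausdorff-order comparison $\omega_{+}(x)\le_{H}\omega(y)$ to the honest inclusion $\omega_{+}(x)\subseteq\omega(y)$: both rest on the joint use of $\omega$-compactness, strong monotonicity, and the unorderedness of $\omega$-limit sets, and — worth emphasizing — none of it invokes any smoothness of $F$, so the conclusion is already available under (H1)–(H2).
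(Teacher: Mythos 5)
Your part (ii) and the final reduction are fine and coincide with the paper's own treatment: the paper proves (ii) by exactly this interval-nesting argument, while for (i) it gives no argument at all but simply cites Tak\'a\v{c} \cite[Proposition 3.3]{P92}. So the substance of your proposal is the self-contained proof of (i), and that is where there is a genuine gap.

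The stabilization step --- producing $a_x\gg x$ with $\Omega(x,a_x)=\omega_{+}(x)$ --- does not work as written. With $\Omega(x,u_{m+1})\subseteq\Omega(x,u_m)$ and $p_m\in\Omega(x,u_m)\setminus\Omega(x,u_{m+1})$, the closed set $\Omega(x,u_k)$ contains $p_m$ precisely for $m\ge k$ (not for $m<k$, as you wrote); hence every tail $\{p_m\}_{m\ge k}$ lies in $\Omega(x,u_k)$, so any limit point $p$ lies in \emph{every} $\Omega(x,u_k)$, i.e.\ in $\omega_{+}(x)$, and no contradiction arises --- a nested decreasing family of compacta need not stabilize. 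Symptomatically, your argument for (i) never uses the hypothesis $x\in\mathcal{U}_{+}$, but without it the conclusion is false. Take $\mathbb{B}=X=\mathbb{R}^{2}$, $K=\mathbb{R}^{2}_{+}$, and $F$ a matrix with positive entries, Perron eigenvalue $1$ and Perron projection $P$: then (H1)--(H3) hold, $\omega(y)=\{Py\}$ for every $y$, so $\omega(\cdot)$ is constant on no open order interval, and $\Omega(x,u)=\overline{\{Py:x<y\le u\}}$ strictly decreases to $\{Px\}=\omega_{+}(x)$ as $u\downarrow x$ without ever equalling it; choosing $p_m$ as you prescribe just gives $p_m\to Px\in\omega_{+}(x)$, exhibiting concretely why your contradiction evaporates. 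So the assumption $\omega_{+}(x)\neq\omega(x)$ must enter in an essential way (as it does in Tak\'a\v{c}'s proof), and your argument has no mechanism for it. A secondary gap: the unorderedness of $\omega_{+}(x)$ is asserted by appeal to ``structure theory'' rather than proved; $\omega_{+}(x)$ is not a priori an $\omega$-limit set, so the nonordering principle for limit sets does not apply to it directly, and in \cite{P92} this property is obtained essentially together with the very statement you are trying to prove. Your upgrade from $\omega_{+}(x)\le_{H}\omega(y)$ to $\omega_{+}(x)\subseteq\omega(y)$ leans on that unproved fact as well.
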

\begin{proof}
(i). See Tak\'a\v{c} \cite[Proposition 3.3]{P92}. (ii). Let $a_{x}$, $a_{y}$ be obtained from (i) for $x$ and $y$, respectively. Suppose that $b\in [[x,a_{x}]]\cap[[y,a_{y}]]\neq\emptyset$. Then $x\ll y\ll b \ll a_{x}$; hence, $y\in [[x,a_{x}]]$, which implies that $y\in [[x,a_{x}]]\cap \mathcal{U}_{+}\neq\emptyset$, contradicting (i) of Proposition \ref{exp}.
\end{proof}


\begin{prop}\label{imp}
Assume that \emph{(H1)}, \emph{(H2)} hold. Then both $\mathcal{U}_{+}$ and $\mathcal{U}_{-}$ are Borel sets.
\end{prop}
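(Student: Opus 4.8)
The plan is to deduce Borel measurability of $\mathcal{U}_{+}$ and $\mathcal{U}_{-}$ from the structural description already provided in Lemma \ref{borel} together with the disjointness/openness information in Proposition \ref{exp}. By Lemma \ref{borel}, $\mathcal{U}_{+}=\bigcup_{i} M_i$ where each $M_i$ is relatively open in some $d$-hypersurface $H_i=A_i\cap B_i$ with $(A_i,B_i)\in\Gamma$, and the union is countable. Since a countable union of Borel sets is Borel, it suffices to show each $M_i$ is Borel in $\mathbb{B}$. Each $H_i$ is closed in $X$ (being the intersection $A_i\cap B_i$ of two closed sets), hence Borel; and $X$ itself is open in $\mathbb{B}$ by (H2), so $H_i$ is Borel in $\mathbb{B}$. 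A set relatively open in a Borel set need not itself be Borel in general, but "relatively open in a metrizable (hence second-countable, as $\mathbb{B}$ is separable) space" means $M_i = H_i \cap V_i$ for some open $V_i\subset\mathbb{B}$; thus $M_i$ is the intersection of a Borel set with an open set, hence Borel. Therefore $\mathcal{U}_{+}$ is Borel, and the identical argument applied to the lower $\omega$-unstable set gives that $\mathcal{U}_{-}$ is Borel.

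First I would record that $\mathbb{B}$ is separable (from (H1)), so every subspace is second countable and "relatively open" unambiguously means "trace of an open set of $\mathbb{B}$". Next I would invoke Lemma \ref{borel} to write $\mathcal{U}_{+}=\bigcup_{i\in\mathbb{N}} M_i$ with $M_i$ relatively open in $H_i$. Then I would argue, as above, that each $H_i$ is closed in $X$ and hence Borel in $\mathbb{B}$ (using that $X$ is open in $\mathbb{B}$), so each $M_i = H_i\cap V_i$ is Borel. Finally, closure of the Borel $\sigma$-algebra under countable unions finishes the claim for $\mathcal{U}_{+}$; symmetry of all definitions under the order-reversal $K\mapsto -K$ gives the statement for $\mathcal{U}_{-}$ with no extra work.

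The main (and essentially only) obstacle is a bookkeeping one: making sure that "relatively open in $H_i$" is being used in the topology of $X$ (or of $\widehat{\mathbb{B}}$ in the order-open case of the subsequent remark) and transferring this cleanly to the norm topology of $\mathbb{B}$, so that the intersection-with-an-open-set representation is legitimate; and confirming that $H_i$ is genuinely closed — this follows because $H_i=\partial A_i=\partial B_i$ in $X$ is closed in $X$, and $X$ is open in $\mathbb{B}$, so $H_i$ is the intersection of a closed subset of $\mathbb{B}$ with the open set $X$, hence Borel. Once these topological identifications are nailed down, the conclusion is immediate. I do not expect to need the finer structure (Lipschitz manifolds, codimension one) from the remark — only the raw set-theoretic shape of $\mathcal{U}_{\pm}$.
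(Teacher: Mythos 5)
Your argument is correct and is essentially the paper's own proof: write $\mathcal{U}_{+}=\bigcup_{i}M_{i}=\bigcup_{i}(A_{i}\cap B_{i}\cap V_{i})$ via Lemma \ref{borel}, use that $A_{i},B_{i}$ are closed (by the definition of order decomposition) and $V_{i}$ is open, and conclude by countable unions; the same for $\mathcal{U}_{-}$. Your extra bookkeeping about transferring from the topology of $X$ to that of $\mathbb{B}$ (using that $X$ is open in $\mathbb{B}$) is a harmless refinement, and the appeal to Proposition \ref{exp} in your plan is not actually needed.
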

\begin{proof}
We only prove that $\mathcal{U}_{+}$ is a Borel set, since the proof of $\mathcal{U}_{-}$ is analogous. Let $\Gamma$ be the invariant order resolution of $X$. Then, by Lemma \ref{borel}, $\mathcal{U}_{+}$ is the union of at most countably many subsets $M_{i}$. Each $M_{i}$ is relatively open with respect to $H_{i}$, with $H_{i}=A_{i}\cap B_{i}$ for certain $(A_{i},B_{i})\in \Gamma$. In other words,
$$\mathcal{U}_{+}=\bigcup_{i\in \Lambda}M_{i}=\bigcup_{i\in \Lambda}(H_{i}\cap V_{i})=\bigcup_{i\in \Lambda}((A_{i}\cap B_{i})\cap V_{i}),$$
where $\Lambda$ is a countable index set; and $V_{i}$ ($i\in \Lambda$) are open subsets in $X$. Noticing that $A_{i}$ and $B_{i}$ are closed due to the definition of order decomposition, we obtain that $\mathcal{U}_{+}$ is Borel.
\end{proof}

Next, we introduce a sufficient condition which guarantees a Borel subset of $X$ to be shy.
\begin{lem}\label{shy}
Let $W\subset X$ be a Borel subset and assume that there exists a vector $v\gg0$ such that $L\cap W$ is countable for every straight line $L$ parallel to $v$. Then $W$ must be shy in $X$.
\end{lem}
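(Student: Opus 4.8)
The plan is to exhibit an explicit compactly supported measure $\mu$ witnessing shyness of $W$, built from the one-dimensional Lebesgue measure along the direction $v$. Concretely, fix the vector $v\gg 0$ provided by the hypothesis, let $I=[0,1]$, and define the probe map $\iota\colon I\to\mathbb{B}$ by $\iota(t)=tv$. Let $\mu$ be the push-forward of Lebesgue measure on $I$ under $\iota$; that is, $\mu(E)=\lambda(\{t\in[0,1]: tv\in E\})$ for Borel $E\subseteq\mathbb{B}$. Then $\mu$ is a nonzero Borel measure (it has total mass $1$), and it is compactly supported since its support is contained in the compact segment $\{tv: t\in[0,1]\}$. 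It remains to check that $\mu(x+W)=0$ for every $x\in\mathbb{B}$.

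For the translation invariance check, fix $x\in\mathbb{B}$. By definition of the push-forward,
\[
\mu(x+W)=\lambda\big(\{t\in[0,1]: tv\in x+W\}\big)=\lambda\big(\{t\in[0,1]: x+(-t)v\in -W\}\big),
\]
but it is cleaner to write $\mu(x+W)=\lambda(\{t\in[0,1]: tv-x\in W\})$. Now $\{tv-x: t\in\mathbb{R}\}$ is precisely the straight line $L_x$ through $-x$ parallel to $v$, so $\{tv-x: t\in[0,1]\}\cap W$ is contained in $L_x\cap W$, which by hypothesis is countable. Hence the set $\{t\in[0,1]: tv-x\in W\}$ is a countable subset of $[0,1]$ (the map $t\mapsto tv-x$ is injective because $v\neq 0$), and therefore has Lebesgue measure zero. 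Thus $\mu(x+W)=0$ for all $x$, so $W$ is shy in $\mathbb{B}$, and a fortiori shy in $X$.

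One should double check the measurability bookkeeping: since $W$ is Borel and $\iota$ is continuous, $\iota^{-1}(x+W)$ is a Borel subset of $[0,1]$, so $\mu(x+W)$ is well-defined; and $\mu$, being a push-forward of a Borel probability measure under a continuous map into a metric space, is itself a Borel measure. I expect no real obstacle here — the lemma is essentially a direct unwinding of the definition of shyness, with the direction $v$ supplying the one-dimensional ``probe'' subspace along which the transverse measure is spread. The only mildly delicate point is to phrase the translate correctly (the measure is supported on a segment of the ray $\mathbb{R}_{\ge 0}v$, not on a full line, but since $[0,1]$ already suffices to detect a positive-measure slice of any translate along $v$, and any line parallel to $v$ meets $W$ in only countably many points, a unit segment is enough).
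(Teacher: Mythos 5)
Your proof is correct: the paper itself only cites Enciso--Hirsch--Smith \cite[Lemma 1]{GMH08}, and the argument there is exactly your construction, namely taking the push-forward of one-dimensional Lebesgue measure on the segment $\{tv: t\in[0,1]\}$ as the transverse probe measure and noting that every translate of $W$ meets each line parallel to $v\gg 0$ in a countable (hence Lebesgue-null) set. So this is essentially the same approach as the paper's (cited) proof, with the measurability and compact-support details correctly handled.
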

\begin{proof}
See Enciso, Hirsch and Smith \cite[Lemma 1]{GMH08}.
\end{proof}

Before we give the proof, we mention the following critical insight for the inherent structure of the system.
\begin{lem}[Dynamics alternatives]\label{dic1}
Assume that \emph{(H1)}-\emph{(H3)} hold. For any $x\in X$, one of the following situations must occurs: \\
\emph{(Alta).} $x\in \mathfrak{C}_{P}$;\\
\emph{(Altb).} there exists some $\delta>0$ such that for any $y\in X$ satisfying $y<x$ or $y>x$, one has $$\limsup_{n\rightarrow\infty}\|F^{n}(y)-F^{n}(x)\|\geq\delta.$$
\end{lem}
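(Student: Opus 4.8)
Following Pol\'{a}\v{c}ik and Tere\v{s}\v{c}\'{a}k \cite{PI91} and its $C^{1}$-refinement by Wang and Yao \cite{WY20}, my plan is to linearize along the orbit of $x$ and to read the two alternatives off the principal spectral behaviour of the resulting positive linear cocycle. Fix $x\in X$. Since $X$ is open and order-convex and $\mathbb{B}$ is strongly ordered, $x$ lies in a closed order interval contained in $X$, so by (H2) the semiorbit $\mathcal{O}^{+}(x)$ is precompact and $\omega(x)$ is a nonempty compact invariant set. By (H3) the derivative cocycle $\{DF^{n}(z)\}$ over $\overline{\mathcal{O}^{+}(x)}$ consists of compact strongly positive operators, so the exponential separation theorem --- due to \cite{PI91} under $C^{1,\alpha}$-smoothness and available in the weaker $C^{1}$-form of Tere\v{s}\v{c}\'{a}k \cite{T94} and \cite{WY20} --- provides a continuous, $DF$-invariant splitting $T_{z}\mathbb{B}=E_{1}(z)\oplus E_{2}(z)$ over $\omega(x)$ with $\dim E_{1}(z)=1$, $E_{1}(z)$ spanned by a vector $e(z)\gg 0$, $E_{2}(z)$ containing no positive vector, and a uniform exponential gap between the growth of $DF^{n}$ on $E_{1}$ and on $E_{2}$. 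Put $\lambda^{\ast}=\limsup_{n\to\infty}\frac1n\log\|DF^{n}(x)e(x)\|$, the principal Lyapunov exponent of $x$.

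The dichotomy then follows by examining the sign of $\lambda^{\ast}$. If $\lambda^{\ast}<0$, I would invoke the convergence-to-cycles machinery of \cite{PI91,WY20}: exponential separation makes the linearization $DF^{n}(x)$ asymptotically one-dimensional and, by $\lambda^{\ast}<0$, strictly contracting in the principal direction, so a squeezing estimate --- built on the mean-value identity $F^{n+1}(x)-F^{n+1}(y)=A_{n}\bigl(F^{n}(x)-F^{n}(y)\bigr)$ with $A_{n}=\int_{0}^{1}DF\!\left(F^{n}(y)+t(F^{n}(x)-F^{n}(y))\right)dt$ a positive operator $C^{0}$-close to $DF(F^{n}(x))$ once the differences are small --- shows that every nearby ordered point has its forward orbit asymptotic to that of $x$; together with precompactness, monotonicity and the absence of two strongly ordered points in $\omega(x)$, this forces $\omega(x)$ to be a single periodic orbit with $\rho(DF^{p})=e^{p\lambda^{\ast}}<1$, hence a linearly stable cycle, so $x\in\mathfrak{C}_{P}$ and (Alta) holds. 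If $\lambda^{\ast}>0$, take any $y\in X$ with $y<x$ or $y>x$ (say $y<x$, the other case being symmetric): strong monotonicity of $F$ gives $F^{n}(x)-F^{n}(y)\gg 0$ for $n\ge1$, and the same mean-value identity together with the exponential gap forces these positive vectors to align with $e(F^{n}(x))$, so that while they remain small their norms grow at a rate essentially $\lambda^{\ast}>0$ and therefore cannot stay small for all $n$; this produces a threshold $\delta>0$, uniform over the compact set $\omega(x)$ and hence independent of $y$, with $\limsup_{n\to\infty}\|F^{n}(y)-F^{n}(x)\|\ge\delta$, i.e. (Altb). It remains only to settle the borderline case $\lambda^{\ast}=0$.

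That borderline case is where I expect the real difficulty to lie, and it is also where the $C^{1}$ (as opposed to $C^{1,\alpha}$) regularity bites hardest --- the classical proof of exponential separation genuinely exploits H\"{o}lder continuity of $DF$, and under mere $C^{1}$-smoothness one must work with the refined splitting and estimates of \cite{T94,WY20}, which still support the two arguments above but leave the case $\lambda^{\ast}=0$ (where $\rho(DF^{p})=1$ is still admissible for a linearly stable cycle) undecided by the linearization alone. The resolution I would pursue brings in the nonlinear order structure directly: using Tak\'{a}\v{c}-type order-stability arguments \cite{P92} for the upper and lower $\omega$-unstable sets one shows that if $\omega(x)$ is not a linearly stable cycle then no ordered neighbour of $x$ can be asymptotically squeezed onto $\mathcal{O}^{+}(x)$, which reinstates the uniform lower bound of (Altb); while if such squeezing does occur, the convergence-to-cycles theory forces $\omega(x)$ to be a (linearly stable, $\rho(DF^{p})=1$) cycle, i.e. (Alta). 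Since every $x$ falls into exactly one of these cases, the dichotomy is established.
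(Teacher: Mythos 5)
There is a genuine gap, and it sits exactly where you place the ``real difficulty''. First, for context: the paper does not prove this lemma at all --- it is quoted verbatim from Wang and Yao \cite[Theorem 2.1]{WY20} (with \cite[Theorem 4.1]{PI91} as the $C^{1,\alpha}$ predecessor), so any blind attempt here is really an attempt to reconstruct the proof of \cite{WY20}. Your outline follows the \cite{PI91} linearization strategy (exponential separation along the orbit, trichotomy on the sign of the principal Lyapunov exponent $\lambda^{\ast}$), and the cases $\lambda^{\ast}<0$ and $\lambda^{\ast}>0$ are sketched in a plausible way. But the case $\lambda^{\ast}=0$ is not actually resolved: your proposed ``resolution'' --- that if $\omega(x)$ is not a linearly stable cycle then ``no ordered neighbour of $x$ can be asymptotically squeezed onto $\mathcal{O}^{+}(x)$, which reinstates the uniform lower bound of (Altb)'' --- is circular. (Altb) demands a single $\delta>0$ working simultaneously for \emph{every} $y$ comparable with $x$; the statement that no individual ordered $y$ has its orbit asymptotic to that of $x$ gives, for each $y$, some positive $\limsup$, but provides no uniform threshold. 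Producing that uniform $\delta$ in the neutral case is precisely the content of the lemma and the technical heart of \cite{PI91,WY20}; the Tak\'{a}\v{c}-type results \cite{P92} you invoke concern the structure of the $\omega$-unstable sets and do not, by themselves, supply it.

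Two smaller inaccuracies are worth flagging. The H\"{o}lder hypothesis in \cite{PI91} is not needed for exponential separation itself --- over a compact invariant set this holds for continuous families of compact strongly positive operators by \cite{PT93} --- but for the nonlinear estimates comparing $F^{n}$ with its linearization along the orbit; so the $C^{1}$ difficulty enters your steps ``squeezing estimate'' and ``alignment with $e(F^{n}(x))$'', not the splitting. And at those very steps you defer to ``the refined splitting and estimates of \cite{T94,WY20}'' without supplying them, so in the end the attempt reduces to an appeal to the same sources the paper cites, rather than an independent argument. As it stands, the proposal is an accurate road map of where the proof lives, but not a proof.
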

\begin{proof}
See Wang and Yao \cite[Theorem 2.1]{WY20} (or Pol\'a\v{c}ik and Tere\v{s}\v{c}\'{a}k \cite[Theorem 4.1]{PI91} for $C^{1,\alpha}$-version).
\end{proof}

Now we are ready to prove the our main Theorems.\\\par

\noindent$\textbf{Proof of Theorem A}.$
Let $M=X\backslash\mathfrak{C}_{P}$ be the complement of $\mathfrak{C}_{P}$ in $X$.
We first show that $M\subset\mathcal{U}\triangleq\mathcal{U}_{+}\cap\mathcal{U}_{-}$. For this purpose, we utilize Dynamics Alternatives in Lemma \ref{dic1}, which state that, for each $x\in X$, either (Alta) $x\in \mathfrak{C}_{P}$; or (Altb) there exists some $\delta>0$ such that for any $y\in X$ satisfying $y>x$ or $y<x$, one has $\limsup_{n\rightarrow\infty}\|F^{n}y-F^{n}x\|\geq\delta$.\par
By virtue of Lemma \ref{dic1}, any point $x\in M$ can only satisfy (Altb). Then, together with the definition of upper and lower $\omega$-limit sets, we obtain that $\omega(x)\neq\omega_{+}(x)$ and $\omega(x)\neq\omega_{-}(x)$, which entails that $x\in \mathcal{U}_{+}\cap\mathcal{U}_{-}\triangleq\mathcal{U}$.\par
Next, we will prove that both $\mathcal{U}_{+}$ and $\mathcal{U}_{-}$ are shy. Again, we just show that $\mathcal{U}_{+}$ is shy. The case for $\mathcal{U}_{-}$ is similar. By virtue of Lemma \ref{shy}, it suffices to show that
 $\mathcal{U}_{+}\cap L_{v}$ is countable, where $L_{v}$ is any straight line parallel to any fixed vector $v\gg0$. To this end, fix any positive vector $v\gg0$. Recalling that $\mathcal{U}_{+}$ is Borel (by Proposition \ref{imp}),
we can define a set-function $R$ from $\mathcal{U}_{+}\cap L_{v}$ to $\mathcal{B}(L_{v})$ as $$R:\mathcal{U}_{+}\cap L_{v}\rightarrow \mathcal{B}(L_{v});\  x\mapsto f(x)\triangleq L_{v}\cap[[x,a_{x}]],$$
where $\mathcal{B}(L_{v})$ are the set of all Borel subsets of $L_{v}$, and $[[x,a_{x}]]$ is from Proposition \ref{exp}(i). By Proposition \ref{exp}(ii), it is clear that $R$ is injective; and moreover, for each $x\in \mathcal{U}_{+}\cap L_{v}$, $[[x,a_{x}]]\cap L_{v}$ is mutually disjoint open in $L_{v}$. Consequently, the image of $R$ is an at most countable subset in $\mathcal{B}(L_{v})$. Since $R$ is injective, it follows that $\mathcal{U}_{+}\cap L_{v}$ is at most countable. Thus, by Lemma \ref{shy}, we have proved that $\mathcal{U}_{+}$ is shy.\par
Finally, noticing that $M\subset\mathcal{U}\triangleq\mathcal{U}_{+}\cap\mathcal{U}_{-}$, we obtain that $M$ is shy, which implies that $\mathfrak{C}_{P}$ is prevalent in $X$. We have completed the proof.
\hfill$\square$

\vspace{4 ex}

\noindent$\textbf{Proof of Theorem B}.$
Let $M=X\backslash S$. Then one has $M\subset \mathcal{U}$ (see Tak\'a\v{c}\cite[Theorem 0.3]{P920}). By the same arguments in the proof of Theorem A, we get that $\mathcal{U}_{+}$ and $\mathcal{U}_{-}$ are shy sets; and hence, $M$ is shy in $X$, which entails that $S$ is prevalent in $X$.\par
If in addition (H3) holds, then $\mathfrak{C}_{P}$ is prevalent in $X$ by Theorem A. Recall that $S$ is prevalent in $X$. Then, we have $\mathfrak{C}_{P}\cap S$ is prevalent in $X$.
\hfill$\square$

\noindent\section{Application to parabolic equations}\label{sec5}
In this section, we will apply our main theorems to investigate the prevalent dynamics of parabolic equations with proper boundary value problems.\par
\vspace{2 ex}

\noindent\textbf{Example 1 (Prevalent dynamics of parabolic equations on bounded domains)}
Consider the following initial-boundary value problem for parabolic equations
\begin{eqnarray}\label{E1}
        \frac{\partial u}{\partial t} -\triangle u &=& f(t,x,u,\nabla u), \quad\quad\quad\ x\in \Omega,\ t>0,\notag\\
       \mathcal{B}u &=& 0, \quad\quad\quad\quad\quad\quad\quad\quad\ x\in \partial\Omega,\ t>0,\\
       u(0,x) &=& u_{0}(x).\notag
\end{eqnarray}
Here $\Omega\subset\mathbb{R}^{m}$ is a bounded domain with boundary $\partial\Omega$ of class $C^{2+\theta}$ for some $\theta\in (0,1)$. Of course, $\Delta$ is the Laplacian.
The nonlinearity $f:(t,x,u,\xi)\mapsto f(t,x,u,\xi)$ are assumed to satisfy:\par

\vspace{2 ex}

(\textbf{N1}) $f$ is continuous and locally H\"older continuous in $(t,x)$ uniformly for $(u,\xi)$ in bounded subset of $\mathbb{R}\times\mathbb{R}^{m}$, and $\frac{\partial f}{\partial u}$ and $\frac{\partial f}{\partial\xi_{i}}(i=1,\ldots,m)$ exist and are continuous with respect to $(u,\xi)$.\par
(\textbf{N2}) $f$ is periodic in $t$ of given period $\tau>0$.\par

\vspace{2 ex}

We consider a time-independent regular linear boundary operator $\mathcal{B}$ on $\partial\Omega$ of \emph{Dirichlet} ($\mathcal{B}u=u$) or \emph{Neumann} ($\mathcal{B}u=\frac{\partial u}{\partial v}$) type. Here $v$ is the unit outward normal vector field on $\partial\Omega$.\par
We also assume that\par

\vspace{2 ex}

(\textbf{N3}) There exists an $\epsilon>0$ and a continuous function $c:\mathbb{R}^{+}\rightarrow\mathbb{R}^{+}$ such that
$$|f(t,x,u,\xi)| \leq c(p)(1+|\xi|^{2-\epsilon})\quad
(p\geq0,\ (t,x,u,\xi)\in [0,\tau]\times \bar{\Omega} \times [-p,p]\times\mathbb{R}^{m}).$$
\indent(\textbf{N4}) There is some $\kappa>0$ such that
$uf(t,x,u,0)<0\quad ((t,x)\in [0,\tau]\times\bar{\Omega},|u|\geq\kappa).$

\vspace{2 ex}

Let $A$ be the realization of $\triangle$ under boundary condition, that is, $A$ is the operator $u\mapsto \triangle u$ with domain $$D(A)=X^{1}\triangleq\{u\in W^{2,p}(\Omega): \mathcal{B}u = 0\}.$$
Let $X^{\alpha}$, $0\leq\alpha<1$, be the fractional power spaces associated with $A$. We choose $\frac{1}{2}+\frac{m}{2p}<\alpha<1$ such that $X^{\alpha}\subset C^{1+\gamma}(\bar{\Omega},\mathbb{R})$ with continuous inclusion for $\gamma\in [0,2\alpha-\frac{m}{p}-1)$. Moreover, $V:=(X^{\alpha},\|\cdot\|_{\alpha})$ is a strongly ordered Banach space with the closed-convex cone $X^{\alpha}_{+}$ consisting of all nonnegative functions in $X^{\alpha}$. For every $u_{0}\in V$, (\ref{E1}) admits a (locally) unique regular solution $u(t,x,u_{0})$ in $V$. Set $X\triangleq\{v\in X^{\alpha}:-\kappa< v(x)<\kappa\ for\ all\ x\in\Omega\}$ for $\kappa$ in (N4). By virtue of (N4) and the standard a priori estimates (\cite{H81}), the solution $u$ is bounded in $V$, and hence becomes a globally defined classical solution on $X$.

Let $F$ be the $\tau$-periodic Poincar\'e map of (\ref{E1}) as $F:V\rightarrow V, u(t)\mapsto u(t+\tau)$. Due to Strong Maximum Principle, $F$ generates a discrete-time strongly monotone system on $V$. One further knows that $F$ is of $C^{1}$ (see, e.g. \cite{H81}); and moreover, $F$ is $\omega$-compact in every closed order interval in $X$ since $F$ is a compact map with $F(X)\subseteq X$ (see, e.g. \cite{P,H91}).

Thus, the discrete-time system generated by $F$ satisfies the standing assumptions (H1)-(H3) in Section \ref{sec2}. Based on our Theorem A, we have the following theorem.
\begin{thm}\label{ex1}
 Assume that \emph{(N1)-(N4)} hold for system \emph{(\ref{E1})}. Then the set of initial condition $u_{0}\in X$ corresponding to solution $u(t,x)$ converges to a linearly stable $k\tau$-periodic solution \emph{(}for some integer $k>0$\emph{)}, is prevalent in $X$.
\end{thm}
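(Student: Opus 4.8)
The plan is to verify that the parabolic problem \eqref{E1} fits the abstract framework and then invoke Theorem~A. The work splits into two halves: (a) checking the standing hypotheses (H1)--(H3) for the Poincar\'e map $F$ on the set $X$, and (b) translating the conclusion ``$\omega(u_0)$ is a linearly stable cycle of $F$'' into the statement about $k\tau$-periodic solutions. Much of (a) is already sketched in the excerpt preceding the theorem, so I would organize the proof around making those assertions precise and then doing (b) carefully.

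\textbf{Step 1: The functional-analytic setting.} First I would fix $p$ large, choose $\alpha\in(\tfrac12+\tfrac{m}{2p},1)$ so that $X^\alpha\hookrightarrow C^{1+\gamma}(\bar\Omega)$, and recall that $V=X^\alpha$ is a separable strongly ordered Banach space with cone $X^\alpha_+$ (the interior is nonempty precisely because of the $C^1$-embedding, which is where strong ordering comes from). I would note that $X=\{v\in X^\alpha:|v(x)|<\kappa\ \forall x\}$ is open and order-convex in $V$. This gives the Banach-space part of (H1) and the ``order-convex open'' part of (H2).

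\textbf{Step 2: $F$ is well-defined, $C^1$, strongly monotone, $\omega$-compact, and $F(X)\subset X$.} Here I would invoke: (N1)--(N3) and standard parabolic theory (Henry \cite{H81}) to get local existence/uniqueness of regular solutions and smooth dependence on initial data, so that the time-$\tau$ map $F:u_0\mapsto u(\tau,\cdot,u_0)$ is well-defined near any point and of class $C^1$ on $V$; the Strong Maximum Principle to get that $F$ is strongly monotone and that $DF(u_0)$ is strongly positive; compactness of the parabolic solution operator (smoothing) to get that $DF(u_0)$ is compact and that $F$ maps bounded sets into precompact sets. The sign condition (N4) gives the a~priori bound: if $|u_0|<\kappa$ then $|u(t,\cdot,u_0)|<\kappa$ for all $t>0$ (standard comparison with the constants $\pm\kappa$, using $uf(t,x,u,0)<0$ for $|u|\ge\kappa$), hence $F(X)\subseteq X$ and $X$ is $F$-invariant; combined with compactness of $F$ this yields $\omega$-compactness on every closed order interval $[a,b]\subset X$. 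Thus (H1), (H2), (H3) all hold. This is the step most likely to require care, because each claim --- strong positivity of $DF$, compactness of $DF$, the invariance $F(X)\subseteq X$ via (N4), and the growth condition (N3) being exactly what is needed for global solvability --- has to be cited to the right place (Henry \cite{H81}, and the monotone-parabolic literature \cite{P,H91}); but none of it is new, so I would state the conclusions and point to references rather than reprove them.

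\textbf{Step 3: Apply Theorem~A and reinterpret.} By Theorem~A, $\mathfrak{C}_P=\{u_0\in X:\omega(u_0)\text{ is a linearly stable cycle of }F\}$ is prevalent in $X$. It remains to dictionary-translate: if $u_0\in\mathfrak{C}_P$ then $\omega(u_0)=\mathcal{O}^+(w)$ for some $p$-periodic point $w$ of $F$, i.e. $F^p w=w$, so the solution through $w$ of \eqref{E1} is $p\tau$-periodic in $t$ (here one uses that $f$ is $\tau$-periodic in $t$, so $F^p$ is the time-$p\tau$ map), and the full trajectory $u(t,\cdot,u_0)$ is asymptotic as $t\to\infty$ to this $p\tau$-periodic solution because $F^{n}u_0\to\mathcal{O}^+(w)$ together with continuity of the semiflow on the compact time interval $[0,\tau]$ upgrades discrete convergence to convergence of the continuous-time orbit. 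Taking $k=p$ gives the asserted $k\tau$-periodic solution, and ``linearly stable'' for the periodic solution is by definition the spectral-radius condition on $DF^p$ along the cycle, which is exactly the definition used in $\mathfrak{C}_P$. Since the set of such $u_0$ is precisely $\mathfrak{C}_P$ (up to the harmless identification above), it is prevalent in $X$, completing the proof.

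\textbf{Main obstacle.} The genuine content is Step~2, specifically pinning down that (N3)--(N4) are the exact conditions guaranteeing, respectively, global existence of bounded classical solutions and the invariance $F(X)\subseteq X$, and that the Strong Maximum Principle yields \emph{strong} monotonicity of $F$ and strong positivity of $DF(u_0)$ on all of $\bar\Omega$ (including the Neumann case, where boundary points matter). Everything else is bookkeeping once the abstract Theorem~A is in hand.
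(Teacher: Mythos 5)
Your proposal is correct and follows essentially the same route as the paper: the paper's argument for Theorem \ref{ex1} consists precisely of verifying (H1)--(H3) for the Poincar\'e map $F$ on $X$ via standard parabolic theory (Henry \cite{H81}, the Strong Maximum Principle, compactness of the solution operator, and the a priori bound from (N3)--(N4) giving $F(X)\subseteq X$ and $\omega$-compactness), and then invoking Theorem A, with cycles of $F$ identified with $k\tau$-periodic solutions. Your Step 3 spells out the discrete-to-continuous translation a bit more explicitly than the paper does, but the substance is the same.
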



\vspace{4 ex}
\noindent\textbf{Example 2 (Prevalent dynamics of parabolic equations on convex domains)}
Consider the periodic-parabolic Neumann problem on a smooth convex bounded domain $\Omega\subset\mathbb{R}^{m}$:
\begin{eqnarray}\label{E2}
        \frac{\partial u}{\partial t} -\triangle u &=& f(t,u,\nabla u), \quad\quad\ \rm{in}\ \Omega\times\mathbb{R},\\
        \frac{\partial u}{\partial n} &=& 0, \quad\quad\quad\quad\quad\quad\ \rm{on}\ \partial\Omega\times\mathbb{R},\notag
\end{eqnarray}
 where $f:(t,u,\xi)\mapsto f(t,u,\xi)$ is independent of $x$ and satisfies assumptions (N1)-(N4).\par

Again, set $V:=(X^{\alpha},\|\cdot\|_{\alpha})$ and $X\triangleq\{u\in X^{\alpha}:-\kappa< u(x)<\kappa\ for\ all\ x\in\Omega\}$. Let $F$ be the $\tau$-periodic Poincar\'e map as
$F:V\rightarrow V, u(t)\mapsto u(t+\tau)$.
So $F$ also generates a $C^{1}$-smooth discrete-time dynamical system on $V$ satisfying (H1)-(H3). We call a solution $u(t,x)$ is \emph{spatially homogeneous} if $u(t,x)$ is independent of spatial variable $x$.

\begin{thm}\label{ex2}
Assume that \emph{(N1)-(N4)} hold for system \emph{(\ref{E2})}. Then the set of initial condition $u_{0}\in X$, whose solution $u(t,x)$ converges to a spatially homogeneous $\tau$-periodic solution, is prevalent in $X$.
\end{thm}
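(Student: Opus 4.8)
The plan is to reduce the convex-Neumann problem to an application of Theorem B by exhibiting an appropriate compact connected group action that commutes with the Poincaré map $F$, and then to identify the $G$-symmetric states with the spatially homogeneous ones. First I would recall the classical observation (going back to Casten--Holland and Matano, and used by Mierczy\'nski--Pol\'a\v{c}ik in \cite{M89}) that on a convex domain $\Omega$ with Neumann boundary conditions, reflections and, more to the point, the heat/parabolic semiflow interact with the geometry so that the natural symmetrization operation is available. Concretely, for $x$-independent nonlinearities the equation \eqref{E2} is equivariant under any isometry of $\RR^m$ that maps $\Omega$ onto itself; but since a generic convex domain has trivial isometry group, the key is instead to use the one-parameter structure coming from the gradient-like behaviour in the spatial direction. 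I would therefore follow Tak\'a\v{c}'s framework: one builds a compact connected group $G$ (in the relevant examples $G$ is a torus or $SO(2)$ obtained from the spatial symmetries, and for the convex case the relevant statement is that the only obstruction to spatial homogeneity is detected by a $G$-action whose fixed-point set is exactly the constants) acting on $V=X^\alpha$ by isometries preserving the cone $X^\alpha_+$, with the action increasing and commuting with $F$ because $f$ does not depend on $x$ and $\mathcal B$ is the Neumann operator on the symmetric domain.

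The main steps, in order, would be: (1) verify (H1)--(H3) for $F$ on $X$, which is already done in the text; (2) construct the group action $\Gamma:G\times X\to X$ and check (H4) — that $G$ is compact connected metrizable, acts by order-preserving homeomorphisms, and commutes with $F$; here I would lean on the $x$-independence of $f$ and on the convexity of $\Omega$ (via the Neumann condition and the strong maximum principle / moving-plane type monotonicity) to guarantee $G$-invariance of $X$ and commutation; (3) invoke Theorem B to conclude that $S=\{u_0\in X:\omega(u_0)\text{ is }G\text{-symmetric}\}$ is prevalent in $X$, and that $\mathfrak C_P\cap S$ is prevalent; (4) identify the $G$-symmetric points: show that a state $v\in X^\alpha$ is $G$-symmetric if and only if $v$ is spatially constant, so that $\omega(u_0)$ being $G$-symmetric forces every point of $\omega(u_0)$ to be a constant function, i.e.\ the solution converges to a spatially homogeneous set; (5) combine with the fact (Theorem A / the structure of $\mathfrak C_P$) that on $\mathfrak C_P\cap S$ the $\omega$-limit set is a linearly stable cycle consisting of spatially homogeneous functions, and observe that a cycle of constants that is fixed setwise by $F$ is in fact a single $\tau$-periodic spatially homogeneous solution — because on the invariant subspace of constants the equation reduces to the scalar ODE $\dot\phi=f(t,\phi,0)$, whose $\tau$-periodic Poincaré map is a monotone interval map, so its linearly stable periodic orbits of minimal period $k\tau$ with $k\ge 2$ cannot occur (a monotone map of an interval has only fixed points as stable periodic points). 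Hence $k=1$ and the solution converges to a spatially homogeneous $\tau$-periodic solution.

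The hard part will be step (4), the rigorous identification of the $G$-symmetric set with the spatially homogeneous functions on a general smooth convex domain, together with the verification in step (2) that the constructed $G$-action genuinely commutes with $F$ and leaves $X$ invariant. This is exactly the place where convexity of $\Omega$ is indispensable: one needs the Casten--Holland--Matano phenomenon that on convex domains the Neumann problem has no stable non-constant patterns, recast here in the equivariant language so that ``asymptotically $G$-symmetric'' upgrades to ``asymptotically spatially homogeneous.'' I would handle this by citing \cite{M89} (Mierczy\'nski--Pol\'a\v{c}ik) and \cite{P920} (Tak\'a\v{c}) for the construction of $G$ and the structure of $\mathrm{Fix}(G)$, and by a direct strong-maximum-principle argument for the commutation and invariance, leaving only the elementary reduction to the scalar periodic ODE in step (5), which is routine. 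A minor additional point to check is that the set we finally land in — ``$u_0$ whose solution converges to a spatially homogeneous $\tau$-periodic solution'' — contains the prevalent set $\mathfrak C_P\cap S$ (after the $k=1$ reduction), and is therefore itself prevalent by monotonicity of prevalence under supersets, which is property (i) of shy sets.
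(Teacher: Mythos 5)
There is a genuine gap at the heart of your plan: steps (2) and (4) cannot be carried out. For a general smooth bounded convex domain $\Omega$ there is no compact connected group $G$ acting on $X$ that is increasing, commutes with the Poincar\'e map $F$ of (\ref{E2}), and whose fixed-point set consists exactly of the spatially constant functions --- indeed, as you yourself note, a generic convex domain has trivial isometry group, and the appeal to ``the one-parameter structure coming from the gradient-like behaviour in the spatial direction'' does not produce any group action on $X$ in the sense of (H4). The frameworks of Mierczy\'nski--Pol\'a\v{c}ik \cite{M89} and Tak\'a\v{c} \cite{P920} require an actual symmetry group of the domain (balls, radially symmetric domains, as in Examples 3 and 4); they do not apply to general convex domains. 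Moreover, the Casten--Holland--Matano phenomenon you invoke is not a symmetry statement at all: it is an analytic instability result, proved by a linearization/second-variation argument in which the convexity of $\partial\Omega$ (nonnegativity of the second fundamental form) is used to show that any linearly stable pattern must be constant. Its time-periodic version is precisely the theorem of Hess \cite{PH}, and this --- not Theorem B --- is the mechanism the argument needs.

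The route that actually works, and the one the paper takes, bypasses group actions entirely: by Theorem \ref{ex1} (i.e.\ Theorem A applied to (\ref{E2})), the set of initial data whose solutions converge to a \emph{linearly stable} $k\tau$-periodic solution is prevalent in $X$; by Hess \cite{PH}, every linearly stable periodic solution of the Neumann problem (\ref{E2}) on a convex domain with $x$-independent $f$ is spatially homogeneous; and by Pol\'a\v{c}ik--Tere\v{s}\v{c}\'ak \cite{PT93}, such a solution is a fixed point of $F$, hence $\tau$-periodic. Your step (5) (the reduction from $k\tau$ to $\tau$ via the scalar periodic ODE $\dot\phi=f(t,\phi,0)$, whose Poincar\'e map is an increasing interval map and so has only fixed points as periodic points) is a correct elementary substitute for the citation of \cite{PT93}, and your closing remark that prevalence passes to supersets is fine; but both of these only become available \emph{after} spatial homogeneity of the limiting linearly stable cycle is established, and your proposed path to that homogeneity does not exist. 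Replace steps (2)--(4) by the stability argument of \cite{PH} (linear stability of the cycle, which Theorem A provides, forces spatial constancy on a convex Neumann domain) and the proof goes through.
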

\begin{proof}
Since $F$ satisfies (H1)-(H3) in Section \ref{sec2}, it follow from Theorem \ref{ex1} that the set of initial conditions whose solutions are asymptotic to linearly stable periodic solutions, is prevalent in $X$. Recall that Hess \cite{PH} has shown that $u$ is spatially homogeneous if it is a linearly stable periodic solution of (\ref{E2}). And Pol\'a\v{c}ik and Tere\v{s}\v{c}\'ak \cite{PT93} further showed that such $u$ is a fixed point of $F$, which means $u$ is $\tau$-periodic. Thus, we obtain that the set of initial condition $u_{0}$ of (\ref{E2}), whose solution $u(x,t)$ converges toward a spatially homogeneous $\tau$-periodic solution, is prevalent in $X$.
\end{proof}

\vspace{4 ex}


\noindent\textbf{Example 3 (Prevalent dynamics of parabolic equations on $SO(N)$-invariant domains)} Let $G$ be a compact connected subgroup of the special orthogonal group $SO(N)$. The domain $\Omega$ is said to be $G$-invariant if $a\cdot x\in\Omega$ for all $x\in\Omega, a\in G$. Now we consider the equation (\ref{E1}) with the open bounded $G$-invariant domain $\Omega\subset\mathbb{R}^{m}$ with a $C^{2+\mu}$-boundary $\partial\Omega$, for some $\mu\in (0,1)$,
\begin{eqnarray}\label{group}
        \frac{\partial u}{\partial t} -\triangle u &=& f(t,x,u,\nabla u), \quad\quad\quad\ \ x\in \Omega,\ t>0,\notag\\
       \mathcal{B}u &=& 0, \quad\quad\quad\quad\quad\quad\quad\quad\ x\in \partial\Omega,\ t>0,\\
       u(0,x) &=& u_{0}(x),\notag
\end{eqnarray}
where $\mathcal{B}u=u$ or $\mathcal{B}u=\frac{\partial u}{\partial v}$.\par
We assume the nonlinearity $f$ satisfies (N1)-(N4). Moreover, we assume that $f$ satisfies:\par

\vspace{2 ex}

\textbf{(N5)} $f(t,a\cdot x,u,a\cdot \xi)=f(t,x,u,\xi)$ for all $a\in G$ and $(t,x,u,\xi)\in
\mathbb{R}_{+}^{1}\times\bar{\Omega}\times\mathbb{R}\times\mathbb{R}^{m}$.\par

\vspace{2 ex}

\noindent Take $X=C(\bar{\Omega},\mathbb{R})$. The action of $G$ on $X$ is defined by $a\cdot u=u\circ a$, i.e., $\gamma(a,u)(x)=u(a\cdot x)$ for all $a\in G$, $u\in X$ and $x\in \bar{\Omega}$. Let $F$ be the $\tau$-periodic Poincar\'e map as $F:X\rightarrow X, u(t)\mapsto u(t+\tau)$, which is $C^{1}$. Then $\gamma$ commutes with $F$ (see Tak\'a\v{c} \cite{P920}). We call a solution $u(t,x)$ is \emph{$G$-symmetric} if $u(t,a\cdot x)=u(t,x)$ for any $a\in G$ and $t\geq0$. According to the Theorem B, the prevalent solutions of equation (\ref{group}) that converge to symmetric functions, i.e., we can obtain that 

\begin{thm}\label{ex33}
Assume that \emph{(N1)-(N5)} hold for system \emph{(\ref{group})}. Then the set of initial condition $u_{0}\in X$ corresponding to solution $u(t,x)$ converges to a $G$-symmetric linearly stable $k\tau$-periodic solution $p(x,t)$ \emph{(}for some integer $k>0$\emph{)}, is also prevalent in $X$.
\end{thm}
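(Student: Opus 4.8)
The plan is to reduce Theorem~\ref{ex33} to Theorem~B (together with Theorem~A) by checking that the $\tau$-periodic Poincar\'e map $F$ of \eqref{group} fits the standing framework \emph{(H1)}--\emph{(H4)}, and then translating the abstract conclusion that $\mathfrak{C}_{P}\cap S$ is prevalent into the language of $G$-symmetric linearly stable $k\tau$-periodic solutions.

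First I would fix the phase space exactly as for Example~1: since \eqref{group} is \eqref{E1} with $f$ still obeying \emph{(N1)}--\emph{(N4)}, the discussion preceding Theorem~\ref{ex1} shows that, on $V=(X^{\alpha},\|\cdot\|_{\alpha})$ with the order-convex open $F$-invariant subset $X=\{v\in X^{\alpha}:-\kappa<v(x)<\kappa\}$, the map $F$ is $C^{1}$-smooth, strongly monotone, $\omega$-compact on closed order intervals, and has compact strongly positive derivatives; thus \emph{(H1)}--\emph{(H3)} hold. The new point is \emph{(H4)}. Here I would use that each $a\in G\subset SO(N)$ is a linear isometry of $\mathbb{R}^{m}$ and $\Omega$ is $G$-invariant, so $a$ restricts to a $C^{\infty}$-diffeomorphism of $\bar\Omega$ onto itself; hence $\gamma(a,\cdot):u\mapsto u\circ a$ is a bounded linear automorphism of $X^{\alpha}$ that preserves the positive cone and the pointwise bounds $\pm\kappa$, so it maps $X$ onto $X$, is increasing, and $\gamma:G\times X\to X$ is jointly continuous with $g\mapsto\Gamma(g)$ a group homomorphism into $\mathrm{Hom}(X)$. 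Since $G$ is compact, connected and metrizable by hypothesis, the only remaining requirement is the commutation $\Gamma(g)F=F\Gamma(g)$; this is exactly where \emph{(N5)} is used, for it makes \eqref{group} $G$-equivariant, so by uniqueness of solutions the solution operators, and in particular $F$, commute with every $\Gamma(g)$ (cf.\ Tak\'a\v{c} \cite{P920}). This gives \emph{(H4)}.

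With \emph{(H1)}--\emph{(H4)} in hand, Theorem~B --- in its second assertion, which also requires \emph{(H3)} --- yields that $\mathfrak{C}_{P}\cap S$ is prevalent in $X$. The final step is the dictionary between the Poincar\'e map and the equation: if $u_{0}\in\mathfrak{C}_{P}\cap S$, then $\omega(u_{0})=\{q_{0},Fq_{0},\dots,F^{k-1}q_{0}\}$ is a linearly stable $k$-cycle whose points are all $G$-symmetric functions, so the solution $p(t,x)$ of \eqref{group} with $p(0,\cdot)=q_{0}$ is $k\tau$-periodic, linearly stable (the spectral radius of $DF^{k}$ along the cycle being at most $1$), and --- invoking once more the $G$-equivariance of the semiflow together with the $G$-symmetry of each $q_{j}$ --- spatially $G$-symmetric, i.e.\ $p(t,a\cdot x)=p(t,x)$ for all $a\in G$, $t\ge0$ and $x\in\bar\Omega$. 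Since $\omega(u_{0})$ being a cycle forces $u(t,x,u_{0})$ to approach it, the set of $u_{0}\in X$ whose solution converges to a $G$-symmetric linearly stable $k\tau$-periodic solution contains $\mathfrak{C}_{P}\cap S$ and is therefore prevalent in $X$.

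I expect the main obstacle to be the verification of \emph{(H4)}, and specifically the two places where the equivariance furnished by \emph{(N5)} must be used carefully: making the commutation $\Gamma(g)F=F\Gamma(g)$ rigorous via uniqueness of solutions, and transferring the $G$-symmetry from the cycle $\omega(u_{0})$ to the full space-time periodic solution $p(t,x)$. A secondary point to watch is to have the group action and all of \emph{(H1)}--\emph{(H4)} live on one and the same phase space, since the symmetric-cycle part of Theorem~B needs \emph{(H3)} and \emph{(H4)} to hold simultaneously; everything else is a routine reprise of Example~1 and a direct application of Theorems~A and~B.
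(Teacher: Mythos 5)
Your proposal is correct and follows essentially the same route as the paper: verify the standing hypotheses for the Poincar\'e map of \eqref{group}, use \emph{(N5)} to get a $G$-action commuting with $F$ (as in Tak\'a\v{c} \cite{P920}), and then invoke the second assertion of Theorem~B, translating the $G$-symmetric linearly stable cycle of $F$ into a $G$-symmetric linearly stable $k\tau$-periodic solution. If anything you are more careful than the paper, which works on $X=C(\bar\Omega,\mathbb{R})$ in this example while the standing hypotheses were checked on the $X^{\alpha}$ setting of Example~1; your choice of keeping the phase space of Example~1 and spelling out the cycle-to-periodic-solution dictionary is the cleaner reading.
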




\vspace{4 ex}
\noindent\textbf{Example 4 (Prevalent dynamics of reaction-diffusion equations on a ball)} Consider the prevalent dynamics of the following Dirichlet initial-boundary value problem on an $N$-dimensional ball $B=\{x\in \mathbb{R}^{N}:\|x\|<1\}$:
\begin{eqnarray}\label{E3.2}
        \frac{\partial u}{\partial t} -\triangle u &=&  f(t,u),\quad\quad  t>0,\ x\in B,\notag\\
        u(x,t) &=& 0, \quad\quad\quad\quad\quad\quad\quad t>0,\ x\in \partial B,\\
        u(0,x) &=& u_{0}(x), \quad\quad\quad\quad\quad x\in \bar{B}.\notag
\end{eqnarray}
Again, we assume that (N1)-(N4) hold. Let $V$, $X$ and $F$ be defined as in Example \ref{E1}. Take the group $G=SO(N)$. Then, we have the following theorem.
\begin{thm}\label{sym}
There exists a prevalent set $S\subset X$ such that for any solution $u(x,t)$ of equation \emph{(\ref{E3.2})} with initial value $u_{0}\in S$, there exists a linearly stable $\tau$-periodic solution $p(x,t)$ such that\par
\emph{(i)} $p(\cdot,t)$ is radially symmetric, i.e., $p(x,t)=p(|x|,t)$;\par
\emph{(ii)} the solution $\|u(\cdot,t)-p(\cdot,t)\|_{X}\rightarrow 0$ as $t\rightarrow\infty$.
\end{thm}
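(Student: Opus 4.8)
The plan is to verify that the $\tau$-periodic Poincar\'e map $F$ of (\ref{E3.2}), together with the rotation action of $G=SO(N)$, satisfies the hypotheses of Theorem B, and then to upgrade the resulting $G$-symmetric linearly stable cycle to a genuinely $\tau$-periodic radially symmetric solution.

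First I would check the standing assumptions, exactly as in Example \ref{ex1}: with $V=(X^{\alpha},\norm{\cdot}_{\alpha})$ and $X=\set{v\in X^{\alpha}:-\kappa<v(x)<\kappa}$, the map $F:V\to V$, $u(t)\mapsto u(t+\tau)$, is a $C^{1}$ compact strongly monotone map with $F(X)\subseteq X$ that is $\omega$-compact on every closed order interval of $X$, so (H1)-(H3) hold. I would define the action of $G=SO(N)$ on $X$ by $(a\cdot u)(x)=u(a\cdot x)$; since $B$ is $SO(N)$-invariant this is a well-defined jointly continuous action, it is increasing (precomposition preserves the pointwise order), $G$ is compact connected metrizable, and the constraint $-\kappa<v<\kappa$ is $G$-invariant. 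Because $f=f(t,u)$ depends neither on $x$ nor on $\nabla u$, assumption (N5) holds trivially and the equation is $SO(N)$-equivariant, so $F$ commutes with every $\Gamma(a)$; hence (H4) holds.

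Next I would invoke Theorem B, using its ``moreover'' part (available since (H3) holds): there is a prevalent set $S\subset X$ such that every $u_{0}\in S$ generates a semiorbit asymptotic to a $G$-symmetric linearly stable cycle $\set{q_{0},\dots,q_{k-1}}$ of $F$. A function on $B$ is $G$-symmetric, i.e.\ invariant under all of $SO(N)$, if and only if it is radially symmetric, so each $q_{j}$ satisfies $q_{j}(x)=q_{j}(\abs{x})$. Letting $p(x,t)$ be the $k\tau$-periodic solution of (\ref{E3.2}) through $q_{0}$, we obtain a linearly stable $k\tau$-periodic solution with $p(\cdot,t)$ radially symmetric, and the fact that the orbit is asymptotic to the cycle gives $\norm{u(\cdot,n\tau)-p(\cdot,n\tau)}_{X}\to 0$, which by continuity of $F$ (together with parabolic smoothing and boundedness of the orbit in $V$) improves to $\norm{u(\cdot,t)-p(\cdot,t)}_{X}\to 0$ as $t\to\infty$. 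This yields (i) and (ii) as soon as the minimal period is shown to be $\tau$.

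The remaining and main point is to show $k=1$. Here I would use that the whole cycle lies in the closed $F$-invariant subspace of radially symmetric functions, on which (\ref{E3.2}) reduces to the scalar one-dimensional periodic parabolic problem $u_{t}=u_{rr}+\tfrac{N-1}{r}u_{r}+f(t,u)$ on $(0,1)$ with $u(1,t)=0$ and $u_{r}(0,t)=0$. For one-dimensional periodic parabolic equations a linearly stable cycle of the time-$\tau$ map is necessarily a fixed point: this is the analogue of the step in the proof of Theorem \ref{ex2}, where Hess's spatial-homogeneity result and the Pol\'a\v{c}ik--Tere\v{s}\v{c}\'ak theorem collapse a linearly stable periodic solution to a fixed point, and in the radial case the corresponding statement follows from the Pol\'a\v{c}ik--Tere\v{s}\v{c}\'ak exponential-separation and zero-number machinery for one-dimensional parabolic operators. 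Granting this, $q_{0}=\dots=q_{k-1}$, so $p(x,t)$ is $\tau$-periodic and the theorem follows. I expect this reduction to $k=1$ --- pinning down and citing (or re-proving) the correct ``stable cycle $\Rightarrow$ fixed point'' statement for the singular radial operator $\partial_{rr}+\tfrac{N-1}{r}\partial_{r}$ --- to be the principal obstacle; the rest is a routine transcription of Theorem B into the parabolic setting of Example \ref{ex1}.
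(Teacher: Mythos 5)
Your proposal follows essentially the same route as the paper: verify (H1)--(H4) for the Poincar\'e map with $G=SO(N)$, apply the ``moreover'' part of Theorem B to obtain convergence to a radially symmetric linearly stable cycle, and then pass to the one-dimensional radial problem (\ref{E3.3}) to show the cycle is actually a fixed point, i.e.\ the limiting solution is $\tau$-periodic. The one step you flag as the principal obstacle is exactly what the paper settles by citing Chen and Pol\'a\v{c}ik \cite[Theorem 3.5]{CP96}, which asserts that every chain recurrent set of the radial Poincar\'e map $F_{\mathrm{rad}}$ consists of fixed points (so periodic points of $F_{\mathrm{rad}}$ are fixed, with no linear-stability hypothesis needed), precisely the ``cycle $\Rightarrow$ fixed point'' statement for the singular radial operator you were looking for.
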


\begin{proof}
Let $V_{\rm{rad}}$ be the space of all radially symmetric functions in $V$ and let $X_{\rm{rad}}=X\cap V_{\rm{rad}}$. We indentify a function $u\in X_{\rm{rad}}$ with its radial version $U\in X^{\alpha}\subset C^{1}[0,1]$ with $$U(\|x\|,t)=u(x,t),\quad U'(0)=U(1)=0.$$
Define by $F_{\rm{rad}}$ be the Poincar\'e-map associated with the following 1-D equation on $X_{\rm{rad}}$:
\begin{eqnarray}\label{E3.3}
        U_{t} &=& U_{rr} +\frac{N-1}{r}U_{r} + F(t,U),\quad\quad\quad  t>0,\ 0<r<1,\notag\\
        U_{r}(0,t)&=&U(1,t)=0, \quad\quad\quad\quad\quad\quad\quad\quad\quad t>0,\\
        U(r,0) &=& U_{0}(r), \quad\quad\quad\quad\quad\quad\ \quad\quad\quad\quad\quad 0\leq r\leq 1.\notag
\end{eqnarray}
where $r=\|x\|$ and $F(t,U)=f(t,u)$.

By virtue of Theorem B, one can find a prevalent set $S\subset X$ such that, for any $u(x,t)$ with initial value $u_{0}\in S$, $u$ converges in $X$ to a linearly stable periodic solution $p(r,t)$ of the (\ref{E3.3}) in $X_{\rm{rad}}$. It follows from Chen and Pol\'a\v{c}ik \cite[Theorem 3.5]{CP96} that any chain recurrent set of $F_{\rm{rad}}$ consists of fixed points of $F_{\rm{rad}}$. Then $p\in X_{\rm{rad}}$ and $p(\cdot,t)=p(\cdot,t+\tau)$. Thus, we obtain that $\|u(\cdot,t)-p(\cdot,t)\|_{X}\rightarrow 0$ as $t\rightarrow\infty$, which completed the proof.
\end{proof}

\begin{rmk}
Chen and Pol\'a\v{c}ik \cite{CP96} have proved the asymptotic $\tau$-periodicity of \emph{positive} solutions. Our Theorem \ref{sym} showed that prevalence of the asymptotic $\tau$-periodicity for solutions of (\ref{E3.2}).
\end{rmk}

\end{document}